\documentclass[a4paper,12pt]{article}
\usepackage{graphicx}
\usepackage{amsmath}
\usepackage{amsthm}
\usepackage{amssymb}
\usepackage{cite}

\makeatletter
\c@MaxMatrixCols=12
\makeatother

\newtheorem{theorem}{Theorem}

\newtheorem{corollary}{Corollary}

\setlength{\textwidth}{16cm}
\setlength{\topskip}{0pt}
\setlength{\headheight}{0cm}
\setlength{\headsep}{0cm}
\setlength{\textheight}{23cm}
\setlength{\oddsidemargin}{0cm}
\setlength{\evensidemargin}{0cm}

\title{Spectral Properties of the Threshold Network Model}
\author{
Yusuke Ide
\footnote{
E-mail: ide@kanagawa-u.ac.jp
}
\\
\small{\textit{Faculty of Engineering, Kanagawa University,}}
\\
\small{\textit{Yokohama, 221-8686, Japan}}
\\
\\
Norio Konno
\footnote{
E-mail: konno@ynu.ac.jp
}
\\
\small{\textit{Department of Applied Mathematics, Yokohama National University,}}
\\
\small{\textit{Yokohama, 240-8501, Japan}}
\\
\\
Nobuaki Obata
\footnote{
E-mail: obata@math.is.tohoku.ac.jp
\newline 
\newline 
\quad \ 
{\it Abbr. title:} 
Spectral Properties of Threshold Network Model
%{\it AMS 2000 subject classifications.}
%60K35, 82B43, 82C22}
\newline 
\quad \ 
{\it Key words and phrases:} 
spectral distribution, threshold network model, random graphs, complex networks.
}
\\
\small{\textit{Graduate School of Information Sciences, Tohoku University,}}
\\
\small{\textit{Sendai, 980-8579, Japan}}
}

\date{}
\begin{document}
\maketitle

\begin{abstract}
We study the spectral distribution of the threshold network model.
The results contain an explicit description and its asymptotic behaviour.
\end{abstract}

%%%%%%%%%%%%%%%%%%%%%%%%%%%%%%%%%%%%%%%%%%%%%%%%%%%%%%%%%%%%%%%%%%%%%%%%%%%%%%%%%%%%%%%
\section{Introduction}
%%%%%%%%%%%%%%%%%%%%%%%%%%%%%%%%%%%%%%%%%%%%%%%%%%%%%%%%%%%%%%%%%%%%%%%%%%%%%%%%%%%%%%% 

The \textit{threshold network model} $\mathcal{G}_{n}(X,\theta)$, 
where $X$ is a random variable, $n\ge2$ is an integer and $\theta\in \mathbb{R}$ is 
a constant called a threshold,
is a random graph on the vertex set $V=\{1,2,\dots ,n\}$ obtained as follows:
let $X_{1},X_{2},\dots,X_{n}$ be independent copies of $X$
and draw an edge between two distinct vertices $i,j\in V$ if $X_{i}+X_{j}>\theta$.
In other words, $\mathcal{G}_{n}(X,\theta)$ is specified 
by the random adjacency matrix $A=(A_{ij})$
defined by  
\begin{eqnarray*}\label{adjTM}
A_{ij}=
\begin{cases}
I_{(\theta ,\infty)}(X_{i}+X_{j}),& \text{if $i\neq j$},\\
0, & \text{otherwise},
\end{cases}
\end{eqnarray*}
where $I_{B}$ denotes the indicator function of a set $B$.
As a small variant one may allow self-loops, see e.g., \cite{BoseSen2007}.
In this case the threshold network model is denoted by $\Tilde{\mathcal{G}}_{n}(X,\theta)$, 
where two vertices $i,j\in V$ (possibly $i=j$) are connected if $X_{i}+X_{j}>\theta$.
The adjacency matrix $\Tilde{A}=(\Tilde{A}_{ij})$ is given by
\begin{equation*}\label{adjTM-1}
\Tilde{A}_{ij}=I_{(\theta ,\infty)}(X_{i}+X_{j}),
\qquad i,j\in V.
\end{equation*}

The threshold network model has been extensively studied
as a reasonable candidate model of real world complex graphs (networks),
which are often characterized by small diameters, high clustering, and 
power-law (scale-free) degree distributions \cite{Albert02,NewmanSIAM,blmch}. 
In fact, the threshold network model belongs to the so-called hidden variable models \cite{ccdm,so}
and is known for being capable of generating scale-free networks. 
Their mean behavior \cite{bps,ccdm,fum,hss,mmk04,scb,so} and limit theorems 
\cite{kmrs05,ikm07rims,ikm09MCAP,fikmmu09IIS} for the degree, the clustering coefficients, 
the number of subgraphs, and the average distance have been analyzed.
See also \cite{dhj09,ikm07rims,ikm09MCAP,kmrs05,mp95,mmk05,mk06} for related works.

Spectral properties of the threshold network model are also of interest.
As a simple case, the binary threshold model appears in \cite{Taraskin}.
The strong law of large numbers and central limit theorem for the rank of 
the adjacency matrix of the model with self-loops are given by \cite{BoseSen2007}. 
Eigenvalues and eigenvectors 
of the Laplacian matrix of the model have been studied \cite{Merris1994,Merris1998}. 
For general results of spectral analysis of graphs see e.g. Hora--Obata\cite{HO}. 
The main purpose of this paper is to study the spectral distribution of the threshold network model.
Our result covers the preceding study of the rank of the adjacency matrix.

This paper is organized as follows:
In Section 2 we recall the hierarchical structure of the threshold network model 
and derive the spectral distribution of each sample graph (threshold graph).
In Section 3 we obtain similar results for the threshold network model which admits self-loops. 
In Section 4 we derive some asymptotic behaviors of the spectral distributions
and in Section 5 we give a simple example called the binary threshold model.

%%%%%%%%%%%%%%%%%%%%%%%%%%%%%%%%%%%%%%%%%%%%%%%%%%%%%%%%%%%%%%%%%%%%%%%%%%%%%%%%%%%%%%%
\section{Spectra of threshold graphs}
\label{sec:Spectra of the threshold network model}
%%%%%%%%%%%%%%%%%%%%%%%%%%%%%%%%%%%%%%%%%%%%%%%%%%%%%%%%%%%%%%%%%%%%%%%%%%%%%%%%%%%%%%% 

Each sample graph $G\in\mathcal{G}_{n}(X,\theta)$ has a hierarchical structure 
described by the so-called creation sequence, introduced by Hagberg--Schult--Swart \cite{hss}.
Here we adopt a variant by Diaconis--Holmes--Janson \cite{dhj09}.
Each $G$ being determined by the values of random variables $X_1,X_2,\dots,X_n$,
we arrange them in increasing order:
$X_{(1)}\leq X_{(2)}\leq \cdots \leq X_{(n)}$.
If $X_{(1)}+X_{(n)}>\theta$, we have
\[
\theta <X_{(1)}+X_{(n)}\leq X_{(2)}+X_{(n)}\leq 
\dots \leq X_{(n-1)}+X_{(n)},
\]
which means that the vertex corresponding to $X_{(n)}$ is connected with 
the $n-1$ other vertices. 
Otherwise, we have 
\begin{eqnarray*}
\theta\ge X_{(1)}+X_{(n)}\ge \dots \ge X_{(1)}+X_{(3)}
\ge X_{(1)}+X_{(2)},
\end{eqnarray*}
which means that the vertex corresponding to $X_{(1)}$ is isolatedD
We set $s_n=1$ or $s_n=0$ according as the former case or the latter occurs.
Then, according to the case we remove the random variable $X_{(n)}$ or $X_{(1)}$,
we continue similar procedure to define $s_{n-1},\dots,s_2$.
Finally, we set $s_1=s_2$ and obtain a $\{0,1\}$-sequence $\{s_1,s_2,\dots,s_n\}$,
which is called the \textit{creation sequence} of $G$ and is denoted by $S_G$.

Given a creation sequence $S_G$ let $k_{i}$ and $l_{i}$ denote the number of consecutive bits of $1$'s 
and $0$'s, respectively, as follows: 
\begin{equation}\label{dfkl}
S_{G}=\{
\overbrace{1,\ldots ,1}^{k_{1}},\overbrace{0,\ldots ,0}^{l_{1}},
\overbrace{1,\ldots ,1}^{k_{2}},\overbrace{0,\ldots ,0}^{l_{2}},
\ldots ,
\overbrace{1,\ldots ,1}^{k_{m}},\overbrace{0,\ldots ,0}^{l_{m}}
\}.
\end{equation}
It may happen that $k_{1}=0$ or $l_{m}=0$,
but we have $k_2,\dots,k_m,l_1,\dots,l_{m-1}\ge1$ and $m\ge1$.
Moreover, by definition we have two cases:
(a) $k_1=0$ (equivalently $s_1=0$) and $l_1\ge2$;
(b) $k_1\ge2$ (equivalently, $s_1=1$).

For example, if $S_{G}=\{1,1,0,0,1,0,1,0\}$ then 
$k_{1}=2,\ l_{1}=2,\ k_{2}=1,\ l_{2}=1,\ k_{3}=1,\ l_{3}=1$ and 
Fig.\ \ref{kaisou3} shows the shape of $G$. 
\begin{figure}[htbp]
\begin{center}
%WinTpicVersion2.15
\unitlength 0.1in
\begin{picture}(16.00,14.00)(0.40,-14.00)
% DOT 0 0 3 0
% 9 400 805 400 1205 200 1605 600 1605 1200 805 1200 1205 1000 1605 1400 1605 1400 1605
% 
\special{pn 20}%
\special{sh 1}%
\special{ar 400 405 10 10 0  6.28318530717959E+0000}%
\special{sh 1}%
\special{ar 400 805 10 10 0  6.28318530717959E+0000}%
\special{sh 1}%
\special{ar 200 1205 10 10 0  6.28318530717959E+0000}%
\special{sh 1}%
\special{ar 600 1205 10 10 0  6.28318530717959E+0000}%
\special{sh 1}%
\special{ar 1200 405 10 10 0  6.28318530717959E+0000}%
\special{sh 1}%
\special{ar 1200 805 10 10 0  6.28318530717959E+0000}%
\special{sh 1}%
\special{ar 1000 1205 10 10 0  6.28318530717959E+0000}%
\special{sh 1}%
\special{ar 1400 1205 10 10 0  6.28318530717959E+0000}%
\special{sh 1}%
\special{ar 1400 1205 10 10 0  6.28318530717959E+0000}%
% STR 2 0 3 0
% 3 1200 400 1200 500 5 0
% $1$
\put(12.0000,-1.0000){\makebox(0,0){$1$}}%
% STR 2 0 3 0
% 3 400 410 400 510 5 0
% $0$
\put(4.0000,-1.1000){\makebox(0,0){$0$}}%
% LINE 2 2 3 0
% 16 40 600 1640 600 1640 1800 40 1800 40 1400 1640 1400 1640 1000 40 1000 40 400 40 1800 840 1800 840 400 1640 400 1640 1800 1640 400 40 400
% 
\special{pn 8}%
\special{pa 40 200}%
\special{pa 1640 200}%
\special{dt 0.045}%
\special{pa 1640 200}%
\special{pa 1639 200}%
\special{dt 0.045}%
\special{pa 1640 1400}%
\special{pa 40 1400}%
\special{dt 0.045}%
\special{pa 40 1400}%
\special{pa 41 1400}%
\special{dt 0.045}%
\special{pa 40 1000}%
\special{pa 1640 1000}%
\special{dt 0.045}%
\special{pa 1640 1000}%
\special{pa 1639 1000}%
\special{dt 0.045}%
\special{pa 1640 600}%
\special{pa 40 600}%
\special{dt 0.045}%
\special{pa 40 600}%
\special{pa 41 600}%
\special{dt 0.045}%
\special{pa 40 0}%
\special{pa 40 1400}%
\special{dt 0.045}%
\special{pa 40 1400}%
\special{pa 40 1399}%
\special{dt 0.045}%
\special{pa 840 1400}%
\special{pa 840 0}%
\special{dt 0.045}%
\special{pa 840 0}%
\special{pa 840 1}%
\special{dt 0.045}%
\special{pa 1640 0}%
\special{pa 1640 1400}%
\special{dt 0.045}%
\special{pa 1640 1400}%
\special{pa 1640 1399}%
\special{dt 0.045}%
\special{pa 1640 0}%
\special{pa 40 0}%
\special{dt 0.045}%
\special{pa 40 0}%
\special{pa 41 0}%
\special{dt 0.045}%
% LINE 2 0 3 0
% 22 1400 1610 1000 1610 1000 1610 1200 1210 1200 1210 1400 1610 1400 1610 1200 810 1200 810 1200 1210 1000 1610 1200 810 1200 810 400 1210 200 1610 1200 810 1200 810 600 1610 600 1610 1200 1210 1200 1210 200 1610
% 
\special{pn 8}%
\special{pa 1400 1210}%
\special{pa 1000 1210}%
\special{fp}%
\special{pa 1000 1210}%
\special{pa 1200 810}%
\special{fp}%
\special{pa 1200 810}%
\special{pa 1400 1210}%
\special{fp}%
\special{pa 1400 1210}%
\special{pa 1200 410}%
\special{fp}%
\special{pa 1200 410}%
\special{pa 1200 810}%
\special{fp}%
\special{pa 1000 1210}%
\special{pa 1200 410}%
\special{fp}%
\special{pa 1200 410}%
\special{pa 400 810}%
\special{fp}%
\special{pa 200 1210}%
\special{pa 1200 410}%
\special{fp}%
\special{pa 1200 410}%
\special{pa 600 1210}%
\special{fp}%
\special{pa 600 1210}%
\special{pa 1200 810}%
\special{fp}%
\special{pa 1200 810}%
\special{pa 200 1210}%
\special{fp}%
\end{picture}%
\caption{A threshold graph $G$ corresponding to $S_{G}=\{1,1,0,0,1,0,1,0\}$}
\label{kaisou3}
\end{center}
\end{figure}
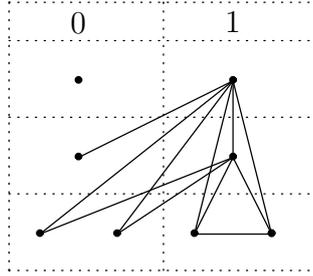

The creation sequence $S_G$ gives rise to a partition of the vertex set:
\[
V=\bigcup_{i=1}^m V^{(1)}_i \cup \bigcup_{i=1}^m V^{(0)}_i\,
\qquad |V^{(1)}_{i}|=k_i\,,
\quad |V^{(0)}_{i}|=l_i\,.
\]
The subgraph induced by $V^{(1)}_i$ is the complete graph of $k_i$ vertices,
and that induced by $V^{(0)}_i$ is the null graph of $l_{i}$ vertices.
Moreover, every vertex in $V^{(1)}_i$ (resp. $V^{(0)}_i$) is connected 
(resp. disconnected) with all vertices in 
\[
V^{(1)}_1\cup\dots\cup V^{(1)}_{i} \cup
V^{(0)}_1\cup\dots\cup V^{(0)}_{i-1}\,.
\]
In general, a graph possessing the above hierarchical structure is 
called a \textit{threshold graph} \cite{mp95}.

\begin{theorem}\label{thm1}
Let $G$ be a thereshold graph with a creation sequence $S_{G}=\{s_1=s_2,s_{3},\dots,s_n\}$.
Define $k_i$ and $l_i$ as in \eqref{dfkl} and set
\begin{equation}\label{defcoef}
C_{n}(-1)=\sum_{i=1}^{m}k_{i}-(m-1)-I_{\{1\}}(s_{1}),
\qquad
C_{n}(0)=\sum_{i=1}^{m}l_{i}-(m-1).
\end{equation}
Then the spectral distribution of $G$ is given by
\begin{equation}\label{eqn1:thm1}
\mu_{n}(G)
=\frac{C_{n}(-1)}{n}\, \delta_{-1}
 +\frac{C_{n}(0)}{n}\, \delta_{0}
 +\frac{1}{n} \sum_{j=1}^{J} \delta_{\lambda_j}\,,
 \quad J=2(m-1)+I_{\{1\}}(s_1),
\end{equation}
where $\{\lambda_j\}$ exhausts the eigenvalues of the matrix:
\begin{equation}\label{eqeigenvalue}
\begin{bmatrix}
k_{m}-1  & l_{m-1} & k_{m-1} & l_{m-2} & \hdots & l_{1} & k_{1}\\
k_{m} & 0 & 0 & 0 & \hdots & 0 & 0\\
k_{m} & 0 & k_{m-1}-1 & l_{m-2} & \hdots & l_{1} & k_{1}\\
k_{m} & 0 & k_{m-1} & 0 & \hdots & 0 & 0\\
\vdots & \vdots & \vdots & \vdots & \ddots & \vdots & \vdots\\
k_{m} & 0 & k_{m-1} & 0 & \hdots & 0  & 0\\
k_{m} & 0 & k_{m-1} & 0 & \hdots & 0 & k_{1}-1 
\end{bmatrix}
\end{equation}
for $s_{1}=1$ (equivalently, $k_1\ge2$), or 
\begin{equation}\label{eqeigenvalue-2}
\begin{bmatrix}
k_{m}-1  & l_{m-1} & k_{m-1} & l_{m-2} & \hdots & k_{2} & l_{1}\\
k_{m} & 0 & 0 & 0 & \hdots & 0 & 0\\
k_{m} & 0 & k_{m-1}-1 & l_{m-2} & \hdots & k_{2} & l_{1}\\
k_{m} & 0 & k_{m-1} & 0 & \hdots & 0 & 0\\
\vdots & \vdots & \vdots & \vdots & \ddots & \vdots & \vdots \\
k_{m} & 0 & k_{m-1} & 0 & \hdots & k_{2}-1 & l_{1}\\
k_{m} & 0 & k_{m-1} & 0 & \hdots & k_{2} & 0 
\end{bmatrix}
\end{equation}
for $s_{1}=0$ (equivalently, $k_1=0$).
Moreover, any $\lambda_j$ in \eqref{eqn1:thm1} differs from $0$ and $-1$.
\end{theorem}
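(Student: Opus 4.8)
The plan is to diagonalise $A$ through the equitable partition furnished by the hierarchical structure. The partition of $V$ into the nonempty blocks among $V_1^{(1)},\dots,V_m^{(1)},V_1^{(0)},\dots,V_m^{(0)}$ is equitable: by the connection rule recalled above, a vertex of any block has a fixed number of neighbours in every other block. Hence $\mathbb{C}^n=\mathcal{U}\oplus\mathcal{U}^{\perp}$ with both summands $A$-invariant, where $\mathcal{U}=\mathrm{span}\{\mathbf{1}_P:P\text{ a block}\}$ and $\mathcal{U}^{\perp}=\{f:\sum_{v\in P}f(v)=0\text{ for every block }P\}$. The spectrum of $A$ counted with multiplicity is then the disjoint union of the spectrum of $A|_{\mathcal{U}^{\perp}}$ and the spectrum of the quotient matrix $B=(B_{PQ})$, where $B_{PQ}$ is the number of neighbours in $Q$ of a vertex of $P$; indeed $B$ represents $A|_{\mathcal{U}}$ in the basis $\{\mathbf{1}_P\}$.

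First I would handle $\mathcal{U}^{\perp}$. Since the external adjacency of a block is uniform, for $f\in\mathcal{U}^{\perp}$ supported on a single block $P$ and $w\notin P$ one has $(Af)(w)=\mathbf{1}\{P\sim w\}\sum_{u\in P}f(u)=0$; thus $\mathcal{U}^{\perp}$ is the orthogonal direct sum, over blocks $P$, of the $A$-invariant spaces $\{f\text{ supported on }P:\sum f=0\}$, on each of which $A$ acts as the adjacency matrix of the induced subgraph on $P$. On $V_i^{(1)}$ this is $K_{k_i}$ with adjacency $J-I$, so $A$ acts as $-I$ and contributes $-1$ with multiplicity $k_i-1$; on $V_i^{(0)}$ it is edgeless and contributes $0$ with multiplicity $l_i-1$. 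Summed over the nonempty blocks, using $k_2,\dots,k_m,l_1,\dots,l_{m-1}\ge1$ together with cases (a)/(b), $\mathcal{U}^{\perp}$ contributes $-1$ with multiplicity $\sum_{i:\,k_i\ge1}(k_i-1)=C_n(-1)$ and $0$ with multiplicity $\sum_{i:\,l_i\ge1}(l_i-1)$, which equals $C_n(0)$ if $l_m=0$ and $C_n(0)-1$ if $l_m\ge1$.

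Next I would compute $B$ explicitly. Listing the nonempty blocks in the order $V_m^{(1)},V_{m-1}^{(0)},V_{m-1}^{(1)},V_{m-2}^{(0)},\dots$ and reading off the neighbour counts from the hierarchical rule reproduces the matrix \eqref{eqeigenvalue} when $s_1=1$ and \eqref{eqeigenvalue-2} when $s_1=0$, with the sole caveat that when $l_m\ge1$ the block $V_m^{(0)}$ consists of isolated vertices, so $B\cong\tilde B\oplus[0]$ with $\tilde B$ the matrix in the statement. The extra eigenvalue $0$ of $B$, present exactly when $l_m\ge1$, makes up the deficit noted above, so $\delta_0$ carries total weight $C_n(0)/n$ in every case, while $\delta_{-1}$ carries weight $C_n(-1)/n$ (neither $\tilde B$ nor $[0]$ has eigenvalue $-1$, as shown below), and the remaining $J=\dim\tilde B$ eigenvalues of $B$ are the $\lambda_1,\dots,\lambda_J$ of \eqref{eqeigenvalue}/\eqref{eqeigenvalue-2}. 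Counting nonempty blocks gives $J=2(m-1)+I_{\{1\}}(s_1)$, and $C_n(-1)+C_n(0)+J=n$ serves as a consistency check; assembling the three contributions yields \eqref{eqn1:thm1}.

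The crux, and the step I expect to be the genuine obstacle, is the remaining assertion: no $\lambda_j$ equals $0$ or $-1$, i.e.\ $\tilde B$ and $\tilde B+I$ are nonsingular. Equivalently, no nonzero block-constant $f=\sum_i a_i\mathbf{1}_{V_i^{(1)}}+\sum_i b_i\mathbf{1}_{V_i^{(0)}}$ (the isolated block, if any, omitted) satisfies $Af=\lambda f$ for $\lambda\in\{0,-1\}$. Evaluating $Af=\lambda f$ at a vertex of $V_t^{(1)}$ gives $\sum_i k_i a_i+\sum_{i<t}l_i b_i=(\lambda+1)a_t$, and at a vertex of $V_t^{(0)}$ gives $\sum_{i>t}k_i a_i=\lambda b_t$. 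Taking successive differences of these relations, a disguised Gaussian elimination exploiting the staircase pattern, forces $b_t$ and $a_t$ to vanish one index at a time, starting from $t=m$; the final step, deducing $a_1=0$ when $s_1=1$ or $b_1=0$ when $s_1=0$, is precisely where the hypotheses $k_1\ge2$ (case (b)) and $l_1\ge2$ (case (a)) are indispensable, since otherwise a spurious eigenvalue would survive. This establishes nonsingularity of $\tilde B$ and $\tilde B+I$, completes the multiplicity count for $\delta_{-1}$ and $\delta_0$, and proves the last assertion of the theorem.
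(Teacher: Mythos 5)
Your proposal is correct and takes essentially the same route as the paper: the decomposition into within-block zero-sum vectors (yielding $-1$ and $0$ with multiplicities $C_n(-1)$ and $C_n(0)$, the isolated block $V^{(0)}_m$ contributing the extra $0$'s exactly as the paper's space $V_m(0)$ does) together with the span of block indicators, on which $A$ is represented by \eqref{eqeigenvalue} or \eqref{eqeigenvalue-2}. The only cosmetic difference is the final non-degeneracy check: you eliminate in the eigenvector equations for $\lambda\in\{0,-1\}$, whereas the paper evaluates the determinants $M(0)$ and $M(-1)$ in its Remark; both verifications hinge on $k_1\ge 2$ when $s_1=1$ and $l_1\ge 2$ when $s_1=0$, as you correctly identify.
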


\begin{proof}
Let $\boldsymbol{1}_{i,j}$ denote the $i\times j$ matrix consisting of only $1$, 
$\boldsymbol{0}_{i,j}$ the $i\times j$ zero matrix,
$I_{i}$ the $i\times i$ identity matrix,
and $\bar{\boldsymbol{1}}_{i,i}=\boldsymbol{1}_{i,i}-I_{i}$.
By the hierarchical structure mentioned above,
the adjacency matrix of $G$ is represented in the form:
\begin{eqnarray*}
A_{G}=
\begin{bmatrix}
\boldsymbol{0}_{l_{m},l_{m}} & \boldsymbol{0}_{l_{m},k_{m}} & \boldsymbol{0}_{l_{m},l_{m-1}} & 
\boldsymbol{0}_{l_{m},k_{m-1}} & \boldsymbol{0}_{l_{m},l_{m-2}} 
& \hdots & \boldsymbol{0}_{l_{m},l_{1}} & \boldsymbol{0}_{l_{m},k_{1}}\\
\boldsymbol{0}_{k_{m},l_{m}} & \bar{\boldsymbol{1}}_{k_{m},k_{m}} & \boldsymbol{1}_{k_{m},l_{m-1}} & 
\boldsymbol{1}_{k_{m},k_{m-1}} & \boldsymbol{1}_{l_{m},l_{m-2}} 
& \hdots & \boldsymbol{1}_{k_{m},l_{1}} & \boldsymbol{1}_{k_{m},k_{1}}\\
\boldsymbol{0}_{l_{m-1},l_{m}} & \boldsymbol{1}_{l_{m-1},k_{m}} & \boldsymbol{0}_{l_{m-1},l_{m-1}} & 
\boldsymbol{0}_{l_{m-1},k_{m-1}} & \boldsymbol{0}_{l_{m-1},l_{m-2}} 
& \hdots & \boldsymbol{0}_{l_{m-1},l_{1}} & \boldsymbol{0}_{l_{m-1},k_{1}}\\
\boldsymbol{0}_{k_{m-1},l_{m}} & \boldsymbol{1}_{k_{m-1},k_{m}} & \boldsymbol{0}_{k_{m-1},l_{m-1}} & 
\bar{\boldsymbol{1}}_{k_{m-1},k_{m-1}} & \boldsymbol{1}_{k_{m-1},l_{m-2}} 
& \hdots & \boldsymbol{1}_{k_{m-1},l_{1}} & \boldsymbol{1}_{k_{m-1},k_{1}}\\
\boldsymbol{0}_{l_{m-2},l_{m}} & \boldsymbol{1}_{l_{m-2},l_{m}} & \boldsymbol{0}_{l_{m-2},l_{m-1}} & 
\boldsymbol{1}_{l_{m-2},k_{m-1}} & \boldsymbol{0}_{l_{m-2},l_{m-2}} 
& \hdots & \boldsymbol{0}_{l_{m-2},l_{1}} & \boldsymbol{0}_{l_{m-2},k_{1}}\\
\vdots & \vdots & \vdots & \vdots & \vdots & \ddots & \vdots & \vdots\\
\boldsymbol{0}_{l_{1},l_{m}} & \boldsymbol{1}_{l_{1},k_{m}} & \boldsymbol{0}_{l_{1},l_{m-1}} & 
\boldsymbol{1}_{l_{1},k_{m-1}} & \boldsymbol{0}_{l_{1},l_{m-2}} 
& \hdots & \boldsymbol{0}_{l_{1},l_{1}} & \boldsymbol{0}_{l_{1},k_{1}}\\
\boldsymbol{0}_{k_{1},l_{m}} & \boldsymbol{1}_{k_{1},k_{m}} & \boldsymbol{0}_{k_{1},l_{m-1}} & 
\boldsymbol{1}_{k_{1},k_{m-1}} & \boldsymbol{0}_{k_{1},l_{m-2}} 
& \hdots & \boldsymbol{0}_{k_{1},l_{1}} & \bar{\boldsymbol{1}}_{k_{1},k_{1}}
\end{bmatrix}.
\end{eqnarray*}
The adjacency matrix $A$ acts on $\mathbb{C}^n$ from the left.
We define subspaces of $\mathbb{C}^n$ by 
\begin{align*}
V_{i}(-1)
&=
\left\{
\begin{bmatrix}
\boldsymbol{0}_{u_{i}+l_{i}}\\
\boldsymbol{\xi }_{k_{i}}\\
\boldsymbol{0}_{d_{i}}
\end{bmatrix}
\ :\ \xi _{1}+\xi _{2}+ \cdots +\xi _{k_{i}}=0 \right\}, \quad 1\leq i\leq m,\\
V_{i}(0)
&=
\left\{
\begin{bmatrix}
\boldsymbol{0}_{u_{i}}\\
\boldsymbol{\eta }_{l_{i}}\\
\boldsymbol{0}_{k_{i}+d_{i}}
\end{bmatrix}
\ :\ \eta _{1}+\eta _{2}+ \cdots +\eta _{l_{i}}=0 \right\},\quad 1\leq i\leq m-1,\\
V_{m}(0)
&=\left\{
\begin{bmatrix}
\boldsymbol{\eta }_{l_{m}}\\
\boldsymbol{0}_{k_{m}+d_{m}}
\end{bmatrix}
\right\},
\end{align*}
where
\begin{eqnarray*}
\boldsymbol{\xi }_{k}=
\begin{bmatrix}
\xi _{1}\\
\xi _{2}\\
\vdots \\
\xi _{k}
\end{bmatrix},
\quad 
\boldsymbol{\eta }_{l}=
\begin{bmatrix}
\eta _{1}\\
\eta _{2}\\
\vdots \\
\eta _{l}
\end{bmatrix},
\quad 
\boldsymbol{1}_{j}=
\begin{bmatrix}
1\\
1\\
\vdots \\
1
\end{bmatrix},
\quad 
\boldsymbol{0}_{j}=
\begin{bmatrix}
0\\
0\\
\vdots \\
0
\end{bmatrix}
\end{eqnarray*}
and
\[
u_{i}=\sum _{j=i+1}^{m}(l_{j}+k_{j}),
\qquad
d_{i}=\sum _{j=1}^{i-1}(l_{j}+k_{j}).
\]
Since $A_{G}$ acts on $V_{i}(-1)$ as the scalar operator with $-1$,
it possesses the eigenvalues $-1$ with multiplicity at least 
\[
\sum_{i=1}^m \dim V_i(-1)
=\sum_{i=1}^m (k_i-1)
=\sum_{i=1}^m k_i- m
\]
if $k_1\ge2$ (i.e., $s_1=1$), and
\[
\sum_{i=2}^m \dim V_i(-1)
=\sum_{i=2}^m (k_i-1)
=\sum_{i=2}^m k_i- (m-1)
\]
if $k_1=0$  (i.e., $s_1=0$).
In any case, the multiplicity is at least $C_n(-1)$ defined in \eqref{defcoef}.
Similarly, acting on $V_{i}(0)$ as a scalar operator with $0$,
$A_G$ possesses the eigenvalues $0$ with multiplicity at least $C_n(0)$.

Let $W$ be the orthogonal complement to 
$\bigoplus _{i=1}^{m}\left(V_{i}(-1)\oplus V_{i}(0)\right)$.
The matrix representation of $A_G$ on $W$ with respect to the basis  
\begin{eqnarray*}
\boldsymbol{v}_{i}=
\begin{bmatrix}
\boldsymbol{0}_{u_{i}+l_{i}}\\
\boldsymbol{1}_{k_{i}}\\
\boldsymbol{0}_{d_{i}}
\end{bmatrix}, \quad 1\leq i\leq m, 
\quad \text{and}\quad 
\boldsymbol{w}_{i}=
\begin{bmatrix}
\boldsymbol{0}_{u_{i}}\\
\boldsymbol{1}_{l_{i}}\\
\boldsymbol{0}_{k_{i}+d_{i}}
\end{bmatrix}, \quad 1\leq i\leq m-1. 
\end{eqnarray*} 
is given by \eqref{eqeigenvalue} or by \eqref{eqeigenvalue-2}
according as $k_1\ge2$ or $k_1=0$.
Then, one may verify easily the eigenvalues of 
the matrices \eqref{eqeigenvalue} and \eqref{eqeigenvalue-2} are different from 
$-1$ nor $0$.
\end{proof}

\noindent{\bfseries Remark}\enspace
After simple calculation we see that
the eigenvalues $\lambda_1,\dots,\lambda_J$ in \eqref{eqn1:thm1}
are obtained from the characteristic equations
\[
M(\lambda)=0,
\]
where
\[
M(\lambda)
=\det 
\begin{bmatrix}
k_{m}-1-\lambda  & l_{m-1} & k_{m-1} & l_{m-2} & \hdots & l_{1} & k_{1}\\
k_{m} & -\lambda & 0 & 0 & \hdots & 0 & 0\\
k_{m} & 0 & k_{m-1}-1-\lambda & l_{m-2} & \hdots & l_{1} & k_{1}\\
k_{m} & 0 & k_{m-1} & -\lambda & \hdots & 0 & 0\\
\vdots & \vdots & \vdots & \vdots & \ddots & \vdots & \vdots\\
k_{m} & 0 & k_{m-1} & 0 & \hdots & -\lambda  & 0\\
k_{m} & 0 & k_{m-1} & 0 & \hdots & 0 & k_{1}-1-\lambda 
\end{bmatrix}
\]
if $k_{1}\geq 2$ (i.e., $s_{1}=1$), and
\[
M(\lambda)
=\det 
\begin{bmatrix}
k_{m}-1-\lambda  & l_{m-1} & k_{m-1} & l_{m-2} & \hdots & k_{2} & l_{1}\\
k_{m} & -\lambda & 0 & 0 & \hdots & 0 & 0\\
k_{m} & 0 & k_{m-1}-1-\lambda & l_{m-2} & \hdots & k_{2} & l_{1}\\
k_{m} & 0 & k_{m-1} & -\lambda & \hdots & 0 & 0\\
\vdots & \vdots & \vdots & \vdots & \ddots & \vdots & \vdots \\
k_{m} & 0 & k_{m-1} & 0 & \hdots & k_{2}-1-\lambda & l_{1}\\
k_{m} & 0 & k_{m-1} & 0 & \hdots & k_{2} & -\lambda 
\end{bmatrix}
\]
if $k_{1}=0$ (i.e., $s_{1}=0$).
Simple calculation shows that
\[
M(-1)=
\begin{cases}
k_{1}\cdots k_{m}\cdot l_{1}\cdots l_{m-1}, &\text{if $k_{1}\geq 2$ (i.e., $s_{1}=1$)},\\
k_{2}\cdots k_{m}\cdot (l_{1}-1)\cdot l_{2}\cdots l_{m-1}, &\text{otherwise},
\end{cases}
\]
and
\[
M(0)=
\begin{cases}
(k_{1}-1)\cdot k_{2}\cdots k_{m}\cdot l_{1}\cdots l_{m-1},
&\text{if $k_{1}\geq 2$ (i.e., $s_{1}=1$)},\\
k_{2}\cdots k_{m}\cdot l_{1}\cdots l_{m-1}, &\text{otherwise},
\end{cases}
\]
from which we see also that $\{\lambda_j\}$ do not contain $-1$ or $0$.

%%%%%%%%%%%%%%%%%%%%%%%%%%%%%%%%%%%%%%%%%%%%%%%%%%%%%%%%%%%%%%%%%%%%%%%%%%%%%%%%%%%%%%%
\section{Spectra of threshold graphs with self-loops}
%%%%%%%%%%%%%%%%%%%%%%%%%%%%%%%%%%%%%%%%%%%%%%%%%%%%%%%%%%%%%%%%%%%%%%%%%%%%%%%%%%%%%%%

The idea of a creation sequence in Section \ref{sec:Spectra of the threshold network model}
can be applied to the threshold network model which allows self-loops.
With each $G\in \widetilde{\mathcal{G}}_{n}(X,\theta)$ we associate 
a creation sequence $\widetilde{S}_{G}=\{\tilde{s}_1,\tilde{s}_2, \dots, \tilde{s}_n\}$
as follows: if $X_{(1)}+X_{(n)}>\theta$, we have
\begin{eqnarray*}
\theta <X_{(1)}+X_{(n)}\leq X_{(2)}+X_{(n)}\leq \dots \leq X_{(n-1)}+X_{(n)}\leq X_{(n)}+X_{(n)},
\end{eqnarray*}
which implies that the vertex corresponding to $X_{(n)}$ is connected with the $n-1$ other vertices
and has a self-loop. 
Otherwise, 
\begin{eqnarray*}
\theta \ge X_{(1)}+X_{(n)}\ge \dots \ge
X_{(1)}+X_{(3)} \ge X_{(1)}+X_{(2)} \ge X_{(1)}+X_{(1)}\,,
\end{eqnarray*}
which means that the vertex corresponding to $X_{(1)}$ is isolated and has no self-loopsD
We set $\tilde{s}_n=1$ or $\tilde{s}_n=0$ according as the former case or the latter occurs.
Then, according to the case we remove the random variable $X_{(n)}$ or $X_{(1)}$,
we continue similar procedure to define $\tilde{s}_{n-1},\dots,\tilde{s}_2$. 
Finally, letting $X_{(*)}$ be the last remained random variable,
set $\tilde{s}_1=1$ if $X_{*}>\theta /2$ and $\tilde{s}_1=0$ otherwise. 
In this case $G$ is called a \textit{threshold graph with self-loops}
associated with a creation sequence $\Tilde{S}=\{\tilde{s}_1,\tilde{s}_2,\dots,\tilde{s}_n\}$.
We note that if $\tilde{s}_j=1$ the corresponding vertex has a self-loop
and otherwise no self-loop.

Given a creation sequence $\Tilde{S}=\{\tilde{s}_1,\tilde{s}_2,\dots,\tilde{s}_n\}$ 
we define $k_j$ and $l_j$ as in \eqref{dfkl}.
It may happen that $k_1=0$ and $l_m=0$,
but $k_2, \dots, k_m, l_1,\dots,l_{m-1}\ge1$ and $m\ge1$.
The adjacency matrix of $G$ is of the form:
\begin{equation}\label{eqn:adjacenct matrix of Tilde G}
\tilde{A}_G=
\begin{bmatrix}
\boldsymbol{0}_{l_{m},l_{m}} & \boldsymbol{0}_{l_{m},k_{m}} & \boldsymbol{0}_{l_{m},l_{m-1}} & 
\boldsymbol{0}_{l_{m},k_{m-1}} & \boldsymbol{0}_{l_{m},l_{m-2}} 
& \hdots & \boldsymbol{0}_{l_{m},l_{1}} & \boldsymbol{0}_{l_{m},k_{1}}\\
\boldsymbol{0}_{k_{m},l_{m}} & \boldsymbol{1}_{k_{m},k_{m}} & \boldsymbol{1}_{k_{m},l_{m-1}} & 
\boldsymbol{1}_{k_{m},k_{m-1}} & \boldsymbol{1}_{l_{m},l_{m-2}} 
& \hdots & \boldsymbol{1}_{k_{m},l_{1}} & \boldsymbol{1}_{k_{m},k_{1}}\\
\boldsymbol{0}_{l_{m-1},l_{m}} & \boldsymbol{1}_{l_{m-1},k_{m}} & \boldsymbol{0}_{l_{m-1},l_{m-1}} & 
\boldsymbol{0}_{l_{m-1},k_{m-1}} & \boldsymbol{0}_{l_{m-1},l_{m-2}} 
& \hdots & \boldsymbol{0}_{l_{m-1},l_{1}} & \boldsymbol{0}_{l_{m-1},k_{1}}\\
\boldsymbol{0}_{k_{m-1},l_{m}} & \boldsymbol{1}_{k_{m-1},k_{m}} & \boldsymbol{0}_{k_{m-1},l_{m-1}} & 
\boldsymbol{1}_{k_{m-1},k_{m-1}} & \boldsymbol{1}_{k_{m-1},l_{m-2}} 
& \hdots & \boldsymbol{1}_{k_{m-1},l_{1}} & \boldsymbol{1}_{k_{m-1},k_{1}}\\
\boldsymbol{0}_{l_{m-2},l_{m}} & \boldsymbol{1}_{l_{m-2},l_{m}} & \boldsymbol{0}_{l_{m-2},l_{m-1}} & 
\boldsymbol{1}_{l_{m-2},k_{m-1}} & \boldsymbol{0}_{l_{m-2},l_{m-2}} 
& \hdots & \boldsymbol{0}_{l_{m-2},l_{1}} & \boldsymbol{0}_{l_{m-2},k_{1}}\\
\vdots & \vdots & \vdots & \vdots & \vdots & \ddots & \vdots & \vdots\\
\boldsymbol{0}_{l_{1},l_{m}} & \boldsymbol{1}_{l_{1},k_{m}} & \boldsymbol{0}_{l_{1},l_{m-1}} & 
\boldsymbol{1}_{l_{1},k_{m-1}} & \boldsymbol{0}_{l_{1},l_{m-2}} 
& \hdots & \boldsymbol{0}_{l_{1},l_{1}} & \boldsymbol{0}_{l_{1},k_{1}}\\
\boldsymbol{0}_{k_{1},l_{m}} & \boldsymbol{1}_{k_{1},k_{m}} & \boldsymbol{0}_{k_{1},l_{m-1}} & 
\boldsymbol{1}_{k_{1},k_{m-1}} & \boldsymbol{0}_{k_{1},l_{m-2}} 
& \hdots & \boldsymbol{0}_{k_{1},l_{1}} & \boldsymbol{1}_{k_{1},k_{1}}
\end{bmatrix}.
\end{equation}
Repeating a similar argument as in Theorem \ref{thm1}, we come to the following

\begin{theorem}\label{thm2}
Let $G$ be a threshold graph with self-loops
associated with a creation sequence $\Tilde{S}=\{\tilde{s}_1,\tilde{s}_2,\dots,\tilde{s}_n\}$
and its adjacency matrix given as in \eqref{eqn:adjacenct matrix of Tilde G}.
Set
\begin{eqnarray}\label{defcoef2}
\widetilde{C}_{n}(0)=n-2(m-1)-I_{\{1\}}(\tilde{s}_{1}).
\end{eqnarray}
Then the spectral distribution of $G$ is given by
\begin{equation}\label{eqn1:thm2}
\widetilde{\mu }_{n}(G)
=\frac{\widetilde{C}_{n}(0)}{n}\, \delta_{0}
 +\frac{1}{n} \sum_{j=1}^J \delta_{\lambda_j}\,,
 \quad J=2(m-1)+I_{\{1\}}(\tilde{s}_{1})
\end{equation}
where $\{\lambda_j\}$ exhaust the eigenvalues of
\begin{equation*}\label{eqeigenvalue2}
\begin{bmatrix}
k_{m}  & l_{m-1} & k_{m-1} & l_{m-2} & \hdots & l_{1} & k_{1}\\
k_{m} & 0 & 0 & 0 & \hdots & 0 & 0\\
k_{m} & 0 & k_{m-1} & l_{m-2} & \hdots & l_{1} & k_{1}\\
k_{m} & 0 & k_{m-1} & 0 & \hdots & 0 & 0\\
\vdots & \vdots & \vdots & \vdots & \ddots & \vdots & \vdots\\
k_{m} & 0 & k_{m-1} & 0 & \hdots & 0  & 0\\
k_{m} & 0 & k_{m-1} & 0 & \hdots & 0 & k_1 
\end{bmatrix}
\end{equation*}
for $\tilde{s}_{1}=1$ (i.e., $k_1\ge1$), or
\begin{equation*}\label{eqeigenvalue2-2}
\begin{bmatrix}
k_{m}  & l_{m-1} & k_{m-1} & l_{m-2} & \hdots & k_{2} & l_{1}\\
k_{m} & 0 & 0 & 0 & \hdots & 0 & 0\\
k_{m} & 0 & k_{m-1} & l_{m-2} & \hdots & k_{2} & l_{1}\\
k_{m} & 0 & k_{m-1} & 0 & \hdots & 0 & 0\\
\vdots & \vdots & \vdots & \vdots & \ddots & \vdots & \vdots \\
k_{m} & 0 & k_{m-1} & 0 & \hdots & k_{2} & l_{1}\\
k_{m} & 0 & k_{m-1} & 0 & \hdots & k_{2} & 0 
\end{bmatrix}
\end{equation*}
for $\tilde{s}_{1}=0$ (i.e., $k_1=0$).
Moreover, any $\lambda_j$ in \eqref{eqn1:thm2} differs from $0$.
\end{theorem}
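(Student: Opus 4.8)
The plan is to follow the proof of Theorem \ref{thm1} almost verbatim, the only structural change being that in passing from the block form of $A_G$ there to the block form \eqref{eqn:adjacenct matrix of Tilde G} of $\widetilde{A}_G$, every diagonal block $\bar{\boldsymbol 1}_{k_i,k_i}=\boldsymbol 1_{k_i,k_i}-I_{k_i}$ is replaced by $\boldsymbol 1_{k_i,k_i}$; equivalently, $\widetilde{A}_G$ is obtained by adding the identity on the coordinates belonging to the $1$-blocks. First I would keep the subspaces $V_i(-1)$, $V_i(0)$ $(1\le i\le m-1)$, $V_m(0)$ and the vectors $\boldsymbol v_i$, $\boldsymbol w_i$ exactly as introduced in that proof. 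Since $\boldsymbol 1_{k_i,k_i}\boldsymbol\xi_{k_i}=0$ whenever $\xi_1+\dots+\xi_{k_i}=0$, the block form \eqref{eqn:adjacenct matrix of Tilde G} shows that $\widetilde{A}_G$ now \emph{annihilates} each $V_i(-1)$ instead of scaling it by $-1$, while it still annihilates each $V_i(0)$ and $V_m(0)$; hence all of $\bigoplus_i\bigl(V_i(-1)\oplus V_i(0)\bigr)$ lies in $\ker\widetilde{A}_G$. Counting dimensions,
\[
\dim\ker\widetilde{A}_G
\;\ge\;\sum_{i\,:\,k_i\ge1}(k_i-1)+\sum_{i=1}^{m-1}(l_i-1)+l_m
\;=\;n-2(m-1)-I_{\{1\}}(\tilde{s}_1)
\;=\;\widetilde{C}_n(0),
\]
where the first sum runs over $1\le i\le m$ when $\tilde{s}_1=1$ and over $2\le i\le m$ when $\tilde{s}_1=0$, and the middle equality follows from $k_1+\dots+k_m+l_1+\dots+l_m=n$.

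Next I would pass to the orthogonal complement $W$ of $\bigoplus_i(V_i(-1)\oplus V_i(0))$. As in Theorem \ref{thm1}, $W$ is $\widetilde{A}_G$-invariant (because $\widetilde{A}_G$ is real symmetric and $W^\perp\subseteq\ker\widetilde{A}_G$) and the $\boldsymbol v_i$ together with the $\boldsymbol w_i$ $(1\le i\le m-1)$ form a basis of it, of cardinality $J=2(m-1)+I_{\{1\}}(\tilde{s}_1)$. A block-by-block reading of \eqref{eqn:adjacenct matrix of Tilde G} gives the action $\widetilde{A}_G\boldsymbol w_j=l_j(\boldsymbol v_m+\dots+\boldsymbol v_{j+1})$ and $\widetilde{A}_G\boldsymbol v_j=k_j\sum_i\boldsymbol v_i+k_j\sum_{i<j}\boldsymbol w_i$; comparing with the computation behind \eqref{eqeigenvalue}--\eqref{eqeigenvalue-2} one sees that the only effect of the self-loops is to replace each diagonal entry $k_i-1$ by $k_i$, which yields precisely the two matrices in the statement according as $\tilde{s}_1=1$ or $\tilde{s}_1=0$.

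Finally I would check that these matrices are nonsingular: then $W$ equals the range of $\widetilde{A}_G$, the kernel has dimension exactly $\widetilde{C}_n(0)$, the $J$ eigenvalues of $\widetilde{A}_G|_W$ are the $\lambda_1,\dots,\lambda_J$ in \eqref{eqn1:thm2} and none of them is $0$, and \eqref{eqn1:thm2} follows. The cleanest route is a recursion: expanding the determinant along the row indexed by $\boldsymbol w_{m-1}$ (which has $k_m$ in the $\boldsymbol v_m$-column and $0$ elsewhere) and then along the $\boldsymbol w_{m-1}$-column (which has $l_{m-1}$ in the $\boldsymbol v_m$-row and $0$ elsewhere) strips off the $\{\boldsymbol v_m,\boldsymbol w_{m-1}\}$-block and leaves the matrix of the same type for the truncated creation sequence $\{1^{k_1},0^{l_1},\dots,1^{k_{m-1}},0^{l_{m-1}}\}$, multiplied by $-k_m l_{m-1}$. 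Iterating, with base case $[\,k_1\,]$ when $\tilde{s}_1=1$ and the empty determinant when $\tilde{s}_1=0$, gives
\[
\det=(-1)^{m-1}k_1k_2\cdots k_m\,l_1\cdots l_{m-1}\quad(\tilde{s}_1=1),
\qquad
\det=(-1)^{m-1}k_2\cdots k_m\,l_1\cdots l_{m-1}\quad(\tilde{s}_1=0),
\]
which is nonzero since $k_2,\dots,k_m,l_1,\dots,l_{m-1}\ge1$ and, when $\tilde{s}_1=1$, also $k_1\ge1$. I expect the main (though still routine) obstacle to be carrying out this determinant bookkeeping cleanly and handling the degenerate cases $m=1$ and $l_m=0$ separately; the rest is a direct transcription of the proof of Theorem \ref{thm1}.
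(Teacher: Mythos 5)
Your proposal is correct and follows essentially the same route as the paper, which proves Theorem~\ref{thm2} simply by ``repeating a similar argument as in Theorem~\ref{thm1}'': the same subspaces $V_i(-1)$, $V_i(0)$, $V_m(0)$ now all lie in the kernel because the diagonal blocks $\bar{\boldsymbol 1}_{k_i,k_i}$ become $\boldsymbol 1_{k_i,k_i}$, and the restriction to $W$ in the basis $\boldsymbol v_i,\boldsymbol w_i$ gives the stated matrices with diagonal entries $k_i$ in place of $k_i-1$. Your explicit determinant recursion yielding $(-1)^{m-1}k_1\cdots k_m\,l_1\cdots l_{m-1}$ (resp.\ without $k_1$) is a correct and slightly more detailed substitute for the paper's terse ``the eigenvalues differ from $0$'' claim and its remark on the characteristic equations.
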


\noindent{\bfseries Remark}\enspace
The eigenvalues $\lambda_1,\dots,\lambda_J$ in \eqref{eqn1:thm2} 
are obtained from the characteristic equations:
\begin{eqnarray*}\label{eqn2-1:thm2}
\det 
\begin{bmatrix}
-\lambda & 0 & \hdots & 0 & 0 & 0 & \hdots & 0 & k_{m}\\
\lambda & -\lambda & \hdots & 0 & 0 & 0 & \hdots & k_{m-1} & 0\\
\vdots & \ddots & \ddots & \vdots & \vdots & \vdots & \hdots & \vdots & \vdots \\
0 & 0 & \ddots & -\lambda & 0 & k_{2} & \hdots & 0 & 0\\
0 & 0 & \hdots & \lambda & k_{1}-\lambda & 0 & \hdots & 0 & 0\\
0 & 0 & \hdots & l_{1} & \lambda & -\lambda & \hdots & 0 & 0\\
\vdots & \vdots & \vdots & \vdots & \vdots & \ddots & \ddots & \vdots & \vdots \\
0 & l_{m-2} & \hdots & 0 & 0 & 0 & \ddots & -\lambda & 0\\
l_{m-1} & 0 & \hdots & 0 & 0 & 0 & \hdots & \lambda & -\lambda 
\end{bmatrix}
=0
\end{eqnarray*}
for $s_{1}=1$ (i.e., $k_1\ge1$), or
\begin{eqnarray*}\label{eqn2-2:thm2}
\det 
\begin{bmatrix}
-\lambda & 0 & \hdots & 0 & 0 & 0 & 0 & \hdots & 0 & k_{m}\\
\lambda & -\lambda & \hdots & 0 & 0 & 0 & 0 & \hdots & k_{m-1} & 0\\
0 & \lambda & \ddots & 0 & 0 & 0 & 0 & \hdots & 0 & 0\\
\vdots & \vdots & \ddots & \ddots & \vdots & \vdots & \vdots & \hdots & \vdots & \vdots \\
0 & 0 & \hdots & \lambda & -\lambda & k_{2} & 0 & \hdots & 0 & 0\\
0 & 0 & \hdots & 0 & l_{1}+\lambda & -\lambda & 0 & \hdots & 0 & 0\\
0 & 0 & \hdots & l_{2} & 0 & \lambda & -\lambda & \hdots & 0 & 0\\
\vdots & \vdots & \vdots & \vdots & \vdots & \vdots & \ddots & \ddots & \vdots & \vdots \\
0 & l_{m-2} & \hdots & 0 & 0 & 0 & 0 & \ddots & -\lambda & 0\\
l_{m-1} & 0 & \hdots & 0 & 0 & 0 & 0 & \hdots & \lambda & -\lambda 
\end{bmatrix}
=0
\end{eqnarray*}
for $s_{1}=0$ (i.e., $k_1=0$).

%%%%%%%%%%%%%%%%%%%%%%%%%%%%%%%%%%%%%%%%%%%%%%%%%%%%%%%%%%%%%%%%%%%%%%%%%%%%%%%%%%%%%%%
\section{Limit theorems}
%%%%%%%%%%%%%%%%%%%%%%%%%%%%%%%%%%%%%%%%%%%%%%%%%%%%%%%%%%%%%%%%%%%%%%%%%%%%%%%%%%%%%%% 

In this section we discuss asymptotic behaviors of the spectral distributions
obtained in the previous sections.

We first consider the case where the distribution of $X$ is discrete and given by
\[
\mathbb{P}(X=i)=p_i\,, \quad i=0,1,\dots,
\qquad \sum _{i=0}^{\infty }p_{i}=1.
\]
Let $m\ge1$ be a fixed integer.
Take a particlar threshold $\theta =2m-1$ and assume that $p_{i}>0$ for $i=0,1,\dots,2m-1$. 
It follows from the strong law of large numbers that 
\begin{align*}
l_{i} &=\sharp\{j:X_{j}=m-i\}, \quad i=1,\dots,m, \\
k_{i} &=\sharp\{j:X_{j}=m-1+i\}, \quad i=1,\dots,m-1, \\
k_{m} &=\sharp\{j:X_{j}\geq 2m-1\}, \\
l_{i} &=k_{i}=0, \quad i\geq m+1,
\end{align*}
for large $n$ almost surely. 
Moreover, denoting by $F$ the distribution function of $X$, we have  
\begin{align*}
&\lim_{n\to \infty }\frac{1}{n} \sum_{i=1}^{m}l_{i}=F(m-1) \quad \text{a.s.}
&\lim_{n\to \infty }\frac{1}{n} \sum_{i=1}^{m}k_{i}=1-F(m-1) \quad \text{a.s.}
\end{align*}
With these observation we easily obtain the following

\begin{theorem}\label{thmlimd}
Notations and assumptions being as above, 
the spectral distributions of $\mathcal{G}_{n}(X,2m-1)$ verifies
\[
\lim_{n\to \infty }\mu _{n}(G)=
\left(1-F(m-1)\right)\cdot \delta _{-1}+F(m-1)\cdot \delta _{0} \quad \text{a.s.}
\]
Similarly, the spectral distributions of $\tilde{\mathcal{G}}_{n}(X,2m-1)$ verifies
\[
\lim_{n\to \infty }\widetilde{\mu }_{n}(G)=\delta _{0} \quad \text{a.s.}
\]
\end{theorem}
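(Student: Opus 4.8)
The plan is to substitute the explicit spectral distributions of Theorems~\ref{thm1} and~\ref{thm2} into the almost-sure description of the creation-sequence data recorded in the paragraph preceding the theorem, and to observe that the ``exceptional'' eigenvalues $\lambda_1,\dots,\lambda_J$ contribute total mass $J/n$, which vanishes because $J$ stays bounded as $n\to\infty$. First I would note that, for the fixed threshold $\theta=2m-1$ and the positivity assumption $p_0,\dots,p_{2m-1}>0$, the strong law of large numbers provides a single $\mathbb{P}$-null set off which, for all sufficiently large $n$, the creation sequence of $\mathcal{G}_n(X,2m-1)$ has exactly the block structure displayed above; in particular the integer ``$m$'' occurring in Theorem~\ref{thm1} is the fixed $m$, so $J=2(m-1)+I_{\{1\}}(s_1)\le 2m-1$ is bounded uniformly in $n$, and likewise for $\tilde{\mathcal{G}}_n(X,2m-1)$ via Theorem~\ref{thm2}.

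For the loopless model, Theorem~\ref{thm1} gives, for every bounded continuous $f:\mathbb{R}\to\mathbb{R}$,
\[
\int f\,d\mu_n(G)=\frac{C_n(-1)}{n}\,f(-1)+\frac{C_n(0)}{n}\,f(0)+\frac1n\sum_{j=1}^{J}f(\lambda_j).
\]
The last term is bounded in absolute value by $\|f\|_\infty\,J/n\le\|f\|_\infty(2m-1)/n\to0$. For the first two terms, the definitions \eqref{defcoef}, together with the identities $\sum_{i=1}^{m}k_i=\sharp\{j:X_j\ge m\}$ and $\sum_{i=1}^{m}l_i=\sharp\{j:X_j\le m-1\}$ and the fact that $m$ is fixed, yield $C_n(-1)/n\to 1-F(m-1)$ and $C_n(0)/n\to F(m-1)$ almost surely. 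Hence $\int f\,d\mu_n(G)\to(1-F(m-1))f(-1)+F(m-1)f(0)$ a.s., i.e.\ $\mu_n(G)\to(1-F(m-1))\delta_{-1}+F(m-1)\delta_0$ weakly, a.s. The loopy case is the same computation with Theorem~\ref{thm2}: $\int f\,d\tilde\mu_n(G)=\tfrac{\tilde C_n(0)}{n}f(0)+\tfrac1n\sum_{j=1}^{J}f(\lambda_j)$, where by \eqref{defcoef2} one has $\tilde C_n(0)/n=1-(2(m-1)+I_{\{1\}}(\tilde s_1))/n\to1$ while the residual term again vanishes, so $\tilde\mu_n(G)\to\delta_0$ a.s.

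Since every nontrivial ingredient (the stabilization of the block structure and the convergence of the normalized block sums) is already established in the paragraph before the theorem, there is no deep obstacle. The one point genuinely worth care is that one \emph{cannot} pass to the limit through moments: the second moment of $\mu_n(G)$ equals $2|E|/n$, which typically grows linearly in $n$, so $\mu_n(G)$ is not moment-determinate and the exceptional eigenvalues $\lambda_j$ can themselves be of order $n$. The argument must therefore be run with bounded continuous test functions, for which the $1/n$-weighted atoms at $\lambda_1,\dots,\lambda_J$ are harmless irrespective of their size; the remaining subtlety is merely to phrase the conclusion correctly as an almost-sure weak limit, fixing one null set off which all of the above holds simultaneously for all large $n$.
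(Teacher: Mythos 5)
Your proposal is correct and follows essentially the same route as the paper, which simply records the almost-sure stabilization of the block sizes $k_i,l_i$ and the limits $\frac1n\sum l_i\to F(m-1)$, $\frac1n\sum k_i\to 1-F(m-1)$ before the theorem and then regards the conclusion as immediate from Theorems~\ref{thm1} and~\ref{thm2}, the $J\le 2m-1$ exceptional eigenvalues carrying total mass $J/n\to0$. Your explicit use of bounded continuous test functions (rather than moments) makes precise why the possibly order-$n$ eigenvalues $\lambda_j$ are harmless, which the paper leaves implicit; aside from the slightly misapplied term ``moment-determinate'' in that side remark, there is nothing to add.
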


\noindent{\bfseries Remark}\enspace
Similar results hold when the distribution of $X$ is discrete and 
$F$ has only finite number of jumps in $(-\infty ,\theta /2]$ or $(\theta /2,\infty)$. 
But no simple description is known for a general case.

\bigskip

Next we consider the case where the distribution of $X$ is continuous.
As is stated by Bose--Sen \cite{BoseSen2007} implicitly,
if the distribution of $X$ is continuous and symmetric around $0$,
then the distribution of zero and one entries in 
the creation sequence $\widetilde{S}$ of each graph generated by $\widetilde{\mathcal{G}}_{n}(X,0)$ 
is the same as the distribution of a sequence of i.i.d.\ Bernoulli random variables 
$\{\widetilde{Y}_{i}\}_{i=1,2,\ldots ,n}$ with success probability $1/2$, that is, 
\begin{eqnarray*}
\mathbb{P}(\widetilde{Y}_{i}=0)=\mathbb{P}(\widetilde{Y}_{i}=1)=1/2,
\quad \text{for $i=1,2,\ldots ,n$}.
\end{eqnarray*}
This means that 
$
\widetilde{S}=\{\tilde{s}_{1},\tilde{s}_{2},\dots ,\tilde{s}_{n}\}
\overset{d}{=}
\{\widetilde{Y}_{1},\widetilde{Y}_{2},\dots ,\widetilde{Y}_{n}\}.
$
Recall that the creation sequence $S$ 
of each graph generated by $\mathcal{G}_{n}(X,0)$ is always satisfied with $s_{1}=s_{2}$. 
Then we observe that 
$
S=\{s_{1}=s_{2},s_{3},\dots ,s_{n}\}
\overset{d}{=}
\{Y_{2},Y_{2},Y_{3},\dots ,Y_{n}\},
$
where $\{Y_{i}\}_{i=2,3,\ldots ,n}$ be the sequence of i.i.d.\ Bernoulli random variables 
with success probability $1/2$, similarly. 

Taking the above consideration into account, 
we obtain asymptotic behaviors of coefficients of point measures on $-1$ and $0$ 
appearing in $\mu _{n}(G)$ and $\widetilde{\mu }_{n}(G)$. 

\begin{theorem}
Assume that the distribution of $X$ is continuous and symmetric around $0$. 
Define $C_{n}(-1)$, $C_{n}(0)$ and $\widetilde{C}_{n}(0)$ as in \eqref{defcoef} and \eqref{defcoef2}.
Then we have 
\begin{enumerate}
\item[\upshape (1)] 
$\displaystyle\lim_{n\to \infty}
C_{n}(-1)/n
=\lim_{n\to \infty}
C_{n}(0)/n
=1/4 \quad \text{a.s.}$
\item[\upshape (2)] 
$\sqrt{n}\left(C_{n}(-1)/n-1/4\right)\Rightarrow N(0,1/4)$ and 
$\sqrt{n}\left(C_{n}(0)/n-1/4\right)\Rightarrow N(0,1/4)$ as $n\to \infty$.
\item[\upshape (3)] 
$\displaystyle\lim_{n\to \infty}
\widetilde{C}_{n}(0)/n=1/2 \quad \text{a.s.}$
\item[\upshape (4)] 
$\sqrt{n}(\widetilde{C}_{n}(0)/n-1/2) \Rightarrow N(0,1/4)$ as $n\to \infty$.
\end{enumerate}
\end{theorem}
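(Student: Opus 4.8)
The plan is to reduce everything to i.i.d.\ fair coins. Recall from the paragraph preceding the statement that for $\mathcal{G}_n(X,0)$ the creation sequence satisfies $(s_1,s_2,\dots,s_n)\overset{d}{=}(Y_2,Y_2,Y_3,\dots,Y_n)$, and for $\widetilde{\mathcal{G}}_n(X,0)$ one has $(\tilde s_1,\dots,\tilde s_n)\overset{d}{=}(\widetilde Y_1,\dots,\widetilde Y_n)$, where the $Y_i$ and $\widetilde Y_i$ are i.i.d.\ Bernoulli$(1/2)$. So it suffices to analyse the three coefficients as functionals of a fair coin sequence. First I would rewrite them via run statistics: if $N_1$, $N_0$ denote the numbers of $1$'s and $0$'s and $R_1$ the number of maximal runs of $1$'s in the creation sequence, then inspection of \eqref{dfkl} shows that the number of $1$-runs equals $m$ when the first bit is $1$ and $m-1$ when it is $0$, i.e.\ $m-1=R_1-I_{\{1\}}(s_1)$; substituting into \eqref{defcoef} and \eqref{defcoef2} gives the exact identities $C_n(-1)=N_1-R_1$, $C_n(0)=N_0-R_1+I_{\{1\}}(s_1)$ and $\widetilde C_n(0)=n-2\widetilde R_1+I_{\{1\}}(\tilde s_1)$, with $\widetilde R_1$ the number of $1$-runs of $\widetilde S$.

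Second, I would turn the run counts into pattern counts. A run of $1$'s starts at position $j$ precisely when $s_{j-1}=0,\ s_j=1$ (or $j=1$, $s_1=1$), whence $R_1=\mathbb 1[s_1=1]+\sum_{j\ge2}\mathbb 1[s_{j-1}=0,s_j=1]$ and correspondingly $N_1-R_1=\sum_{j\ge 2}\mathbb 1[s_{j-1}=1,s_j=1]$. Under the distributional identifications these are, up to an $O(1)$ correction from the first one or two coordinates (in the loop-free case the repetition $s_1=s_2$ changes the running counts only by a bounded amount), sums of the indicators of a fixed two-letter word in an i.i.d.\ fair sequence, hence sums of stationary, $1$-dependent Bernoulli-type variables with mean $1/4$. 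Parts (1) and (3) now follow from the strong law of large numbers for $1$-dependent sequences (split each sum over even and odd indices into two genuinely i.i.d.\ sums): almost surely $N_1/n\to1/2$, $N_0/n\to1/2$, $R_1/n\to1/4$, $\widetilde R_1/n\to1/4$, so $C_n(-1)/n,\ C_n(0)/n\to 1/2-1/4=1/4$ and $\widetilde C_n(0)/n\to1-2\cdot\tfrac14=\tfrac12$.

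Third, for the central limit statements (2) and (4) I would invoke the Hoeffding--Robbins central limit theorem for $m$-dependent stationary sequences: if $W_j$ is the relevant pattern indicator, then $\tfrac1{\sqrt n}\sum_{j\le n}(W_j-\tfrac14)\Rightarrow N(0,\sigma^2)$ with $\sigma^2=\operatorname{Var}(W_1)+2\operatorname{Cov}(W_1,W_2)$, all further covariances vanishing by $1$-dependence. Since each of $C_n(-1)$, $C_n(0)$, $\widetilde C_n(0)$ differs from a fixed affine image of such a sum by an $O(1)$ term, Slutsky's lemma transfers asymptotic normality to them and the affine scaling converts $N(0,\sigma^2)$ into the asserted Gaussian. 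What remains is bookkeeping: evaluate $\operatorname{Var}(W_1)$ and $\operatorname{Cov}(W_1,W_2)$ for the words $11$ (entering $C_n(-1)$) and $01$ (entering $R_1$, hence $C_n(0)$ and $\widetilde C_n(0)$) by reading off probabilities of three-letter fair-coin words, and combine with the coefficients $+1,-1,-2$ sitting in front of the sums.

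The crux is the translation carried out in the first two steps rather than any analytic estimate. One must verify that the run counts really are pattern counts modulo bounded boundary terms, that the loop-free doubling $s_1=s_2$ and the rule defining $\tilde s_1$ via $X_{(*)}>\theta/2$ perturb $N_1,N_0,R_1,\widetilde R_1$ only by $O(1)$, and --- crucially for (2) and (4) --- that the random number of blocks $m$ enters solely through $R_1=(\text{pattern count})+O(1)$, so that no hidden source of fluctuation is omitted. With that in hand, (1)--(4) are routine consequences of the classical SLLN and CLT for $1$-dependent sequences, and the asymptotic variances emerge from the covariance computation.
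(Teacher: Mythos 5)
Your route is essentially the paper's own. The paper's proof carries out exactly your first two steps: using the Bernoulli reduction it derives the distributional identities $C_n(-1)\overset{d}{=}Y_2+\sum_{i=2}^{n-1}Y_iY_{i+1}$, $C_n(0)\overset{d}{=}2-Y_2-Y_n+\sum_{i=2}^{n-1}(1-Y_i)(1-Y_{i+1})$ and $\widetilde C_n(0)\overset{d}{=}n-2\sum_{i=1}^{n-1}(1-\widetilde Y_i)\widetilde Y_{i+1}-\widetilde Y_1$, which are equivalent to your run-count identities $C_n(-1)=N_1-R_1$, $C_n(0)=N_0-(m-1)$, $\widetilde C_n(0)=n-2\widetilde R_1+I_{\{1\}}(\tilde s_1)$, computes the expectations, and then delegates the SLLN and CLT to ``a similar argument as in Bose--Sen, Theorem 1''. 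Your even/odd splitting for the strong law and the Hoeffding--Robbins CLT for $1$-dependent stationary sequences are a perfectly legitimate way of filling in that cited step, and parts (1), (3) and (4) do come out: for (4) the word $01$ is not self-overlapping, so the per-site variance is $3/16-2\cdot(1/16)=1/16$, and the coefficient $-2$ produces the stated $N(0,1/4)$.

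The genuine problem is the step you dismiss as bookkeeping. For $C_n(-1)$ the relevant word is $11$, which is self-overlapping: with $W_i=Y_iY_{i+1}$ one has $\operatorname{Var}(W_1)=3/16$ and $\operatorname{Cov}(W_1,W_2)=1/8-1/16=+1/16$, so $\sigma^2=\operatorname{Var}(W_1)+2\operatorname{Cov}(W_1,W_2)=5/16$; the same value governs the $00$-count behind $C_n(0)$. Since $C_n(-1)$ (resp.\ $C_n(0)$) differs from that single pattern count only by an $O(1)$ boundary term, your scheme, if carried out, yields $\sqrt n\,(C_n(-1)/n-1/4)\Rightarrow N(0,5/16)$ and likewise for $C_n(0)$ --- not the asserted $N(0,1/4)$. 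So the sentence claiming that the affine scaling ``converts $N(0,\sigma^2)$ into the asserted Gaussian'' is exactly the unverified point, and it fails for part (2); the positive overlap covariance cannot be combined away (note that the paper's own representation $Y_2+\sum_{i=2}^{n-1}Y_iY_{i+1}$ leads to the same $5/16$, so your computation in fact exposes a discrepancy with the variance stated in (2), in contrast to (4) where $1/4$ is correct and agrees with Bose--Sen's result for the rank). A smaller caveat: you propose to treat $C_n(0)=N_0-\#\{01\}$ as a combination of two sums; these are correlated, so you must either track the cross-covariance or recombine into the single $00$-count as the paper does before applying the $1$-dependent CLT.
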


\begin{proof}
Note the following relations:
\begin{align*}
C_{n}(-1)
&=
\sum_{i=1}^{m}k_{i}-(m-1)-I_{\{1\}}(s_{1})\\
&\overset{d}{=}
\left(Y_{2}+\sum_{i=2}^{n}Y_{i}\right)
-\sum_{i=2}^{n-1}(1-Y_{i})Y_{i+1}
-Y_{2}
=
Y_{2}+\sum_{i=2}^{n-1}Y_{i}Y_{i+1},\\
C_{n}(0)
&=
\sum_{i=1}^{m}l_{i}-(m-1)\\
&\overset{d}{=}
\left\{(1-Y_{2})+\sum_{i=2}^{n}(1-Y_{i})\right\}
-\sum_{i=2}^{n-1}(1-Y_{i})Y_{i+1}\\
&=
2-Y_{2}-Y_{n}+\sum_{i=2}^{n-1}(1-Y_{i})(1-Y_{i+1}),\\
\widetilde{C}_{n}(0)
&=
n-2(m-1)-I_{\{1\}}(\tilde{s}_{1})\\
&\overset{d}{=}
n-2\sum_{i=1}^{n-1}(1-\widetilde{Y}_{i})\widetilde{Y}_{i+1}-\widetilde{Y}_{1}
=
1-\widetilde{Y}_{n}
+\sum_{i=1}^{n-1}(1-\widetilde{Y}_{i}+\widetilde{Y}_{i+1})(1+\widetilde{Y}_{i}-\widetilde{Y}_{i+1}).
\end{align*}
We then easily check that
\[
\mathbb{E} [C_{n}(-1)]
=\mathbb{E}[C_{n}(0)]-\frac12
=\frac{n}{4}
\]
and 
\[
\mathbb{E}[\widetilde{C}_{n}(0)]=\frac{n}{2}\,.
\]
Applying a similar argument as in \cite[Theorem 1]{BoseSen2007}, we have the assertion.
\end{proof}

When the distribution of $X$ is continuous and symmetric around $\theta /2$,
we can obtain similar results for $\mathcal{G}_{n}(X,\theta )$ and 
$\widetilde{\mathcal{G}}_{n}(X,\theta )$ by straightforward modification.
Study covering a more general situation is now in progress.

%%%%%%%%%%%%%%%%%%%%%%%%%%%
\section{Binary threshold model}
%%%%%%%%%%%%%%%%%%%%%%%%%%%

In this section we give a simple example. 
The threshold network model defined by
Bernoulli trials $X_1,X_2,\dots,X_n$ with success probability $p$,
i.e., $0<P(X_i=1)=p<1$, and a threshold $0\le \theta<1$ is called the 
\textit{binary threshold model} and is denoted by $\mathcal{G}_n(p)$.
For $G\in \mathcal{G}_n(p)$ the partition of the vertex set $V$ is given by
\[
V=V^{(1)}\cup V^{(0)},
\qquad
V^{(1)}=\{i\,;\, X_i=1\},
\quad
V^{(0)}=\{i\,;\, X_i=0\}.
\]

\begin{theorem}\label{thm:spectrum of S(k,l)}
For $G\in \mathcal{G}_n(p)$ we set $|V^{(1)}|=k$ and $|V^{(0)}|=l$.
Then the spectral distribution of $G$ is given by 
\begin{equation*}\label{eqn:spec(S(k,l))}
\mu_{k,l}
=\frac{k-1}{n}\,\delta_{-1}
 +\frac{l-1}{n}\,\delta_0
 +\frac{1}{n}\,\delta_{\lambda_+}
 +\frac{1}{n}\,\delta_{\lambda_-}\,,
\end{equation*}
where 
\begin{equation}\label{eqn:lambda+-}
\lambda_{\pm}=\frac{k-1\pm \sqrt{(k-1)^2+4kl}}{2}\,.
\end{equation}
\end{theorem}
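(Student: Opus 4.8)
The plan is to run the eigenspace argument from the proof of Theorem~\ref{thm1} in the present very small setting; indeed, $G\in\mathcal{G}_n(p)$ with $|V^{(1)}|=k\ge1$ and $|V^{(0)}|=l\ge2$ is precisely the $m=2$, $s_1=0$ instance of that theorem, with $k_1=0$, $l_1=l$, $k_2=k$, $l_2=0$, so the assertion can also be read off directly from \eqref{eqn1:thm1} and \eqref{eqeigenvalue-2} (which becomes the $2\times2$ matrix $\begin{bmatrix}k-1 & l\\ k & 0\end{bmatrix}$, with $C_n(-1)=k-1$, $C_n(0)=l-1$, $J=2$). For a self-contained argument I would instead proceed as follows. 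The subgraph induced by $V^{(1)}$ is the complete graph $K_k$, that induced by $V^{(0)}$ is the empty graph, and every vertex of $V^{(1)}$ is adjacent to every vertex of $V^{(0)}$; ordering the vertices of $V^{(1)}$ first, the adjacency matrix is the block matrix
\[
A_G=
\begin{bmatrix}
\bar{\boldsymbol{1}}_{k,k} & \boldsymbol{1}_{k,l}\\
\boldsymbol{1}_{l,k} & \boldsymbol{0}_{l,l}
\end{bmatrix}.
\]

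Next I would exhibit three mutually orthogonal invariant subspaces of $\mathbb{C}^n$. On the $(k-1)$-dimensional space of vectors $(\boldsymbol{\xi},\boldsymbol{0})$ with $\xi_1+\dots+\xi_k=0$ a direct computation shows that $A_G$ acts as multiplication by $-1$; on the $(l-1)$-dimensional space of vectors $(\boldsymbol{0},\boldsymbol{\eta})$ with $\eta_1+\dots+\eta_l=0$ it acts as the zero operator; and on the remaining $2$-dimensional space spanned by $\boldsymbol{v}=(\boldsymbol{1}_k,\boldsymbol{0})$ and $\boldsymbol{w}=(\boldsymbol{0},\boldsymbol{1}_l)$ one gets $A_G\boldsymbol{v}=(k-1)\boldsymbol{v}+k\boldsymbol{w}$ and $A_G\boldsymbol{w}=l\boldsymbol{v}$, so $A_G$ is there represented by $\begin{bmatrix}k-1 & l\\ k & 0\end{bmatrix}$, whose characteristic equation $\lambda^2-(k-1)\lambda-kl=0$ has exactly the two roots $\lambda_\pm$ of \eqref{eqn:lambda+-}. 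The multiplicities accumulated so far sum to $(k-1)+(l-1)+2=k+l=n$, so they account for the whole spectrum of $A_G$, and assembling them gives $\mu_{k,l}$.

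The only point needing care, and the main (essentially only) obstacle, is the degenerate range of parameters. When $k\le1$ or $l\le1$ the atoms $-1$, $0$, $\lambda_+$, $\lambda_-$ in $\mu_{k,l}$ need not be distinct — e.g.\ $\lambda_-=-1$ when $l=1$, and $\lambda_+=0$, $\lambda_-=-1$ when $k=0$ — and the coefficients $(k-1)/n$ or $(l-1)/n$ may vanish or even be negative; in these boundary situations $G$ is one of $\overline{K_n}$, $K_n$, or a star $K_{1,l}$, and I would check by inspection that, after collecting equal atoms, the right-hand side of the asserted identity still reproduces the true spectral distribution. For the generic case $k\ge1$, $l\ge2$ no such collecting is needed: the computation above (equivalently, Theorem~\ref{thm1}) delivers the formula outright.
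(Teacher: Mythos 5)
Your proposal is correct and takes essentially the same route as the paper: the paper's proof consists precisely of applying Theorem~\ref{thm1} with $m=2$, $k_1=l_2=0$, $l_1=l$, $k_2=k$, so that \eqref{eqeigenvalue-2} reduces to $\left[\begin{smallmatrix} k-1 & l\\ k & 0\end{smallmatrix}\right]$ with eigenvalues $\lambda_\pm$, and your ``self-contained'' block computation is just the proof of Theorem~\ref{thm1} specialized to this case. Your additional inspection of the boundary parameters $k\le 1$ or $l\le 1$ (where the atoms coincide and the identification with the $s_1=0$, $l_1\ge 2$ case of Theorem~\ref{thm1} breaks down) is careful extra bookkeeping that the paper passes over silently, but it does not change the approach.
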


\begin{proof}
We need only to apply  Theorem \ref{thm1} with $l_{1}=l$, $l_{2}=k_{1}=0$, $k_{2}=k$
and $m=2$.
In this case, \eqref{eqeigenvalue-2} becomes
\[
\begin{bmatrix}
k-1 & l \\
k & 0 
\end{bmatrix},
\]
of which the eigenvalues are $\lambda_\pm$ in \eqref{eqn:lambda+-}.
\end{proof}

\begin{corollary}
Let $\mu _{n}(G)$ be the spectral distribution of $G\in \mathcal{G}_n(p)$. 
Then we have 
\begin{eqnarray*}
\lim _{n\to \infty }\mu _{n}(G)=p\cdot \delta _{-1}+(1-p)\cdot \delta _{0}\quad \text{a.s.}
\end{eqnarray*}
\end{corollary}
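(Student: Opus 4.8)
The plan is to combine Theorem \ref{thm:spectrum of S(k,l)} with the strong law of large numbers for the Bernoulli variables $X_1,\dots,X_n$. Write $k=k(n)=|V^{(1)}|=\sum_{i=1}^n X_i$ and $l=l(n)=n-k$. By the strong law of large numbers, $k/n\to p$ and $l/n\to 1-p$ almost surely. Theorem \ref{thm:spectrum of S(k,l)} gives
\[
\mu_n(G)=\frac{k-1}{n}\,\delta_{-1}+\frac{l-1}{n}\,\delta_0+\frac{1}{n}\,\delta_{\lambda_+}+\frac{1}{n}\,\delta_{\lambda_-},
\]
so the two atomic weights $(k-1)/n$ and $(l-1)/n$ converge almost surely to $p$ and $1-p$, while the remaining two atoms carry total mass $2/n\to0$. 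It therefore suffices to argue that the measures $\tfrac1n\delta_{\lambda_\pm}$ do not contribute in the limit; this is where a tiny bit of care is needed because $\lambda_+$ itself diverges.

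First I would fix the mode of convergence: "spectral distribution" here is the empirical eigenvalue distribution, a probability measure on $\mathbb{R}$, and the natural notion is weak convergence (convergence in distribution), i.e. $\int f\,d\mu_n(G)\to\int f\,d\mu$ for every bounded continuous $f$. For such an $f$,
\[
\int f\,d\mu_n(G)=\frac{k-1}{n}f(-1)+\frac{l-1}{n}f(0)+\frac{1}{n}f(\lambda_+)+\frac{1}{n}f(\lambda_-).
\]
Since $f$ is bounded, $|f(\lambda_\pm)|\le\|f\|_\infty$, so $\tfrac1n f(\lambda_\pm)\to0$ regardless of how $\lambda_\pm$ behaves; and $(k-1)/n\to p$, $(l-1)/n\to 1-p$ a.s. by the strong law. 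Hence $\int f\,d\mu_n(G)\to p\,f(-1)+(1-p)\,f(0)=\int f\,d\big(p\,\delta_{-1}+(1-p)\,\delta_0\big)$ almost surely, which is exactly the claim. (One should note the null set on which convergence fails can be taken independent of $f$ by choosing a countable convergence-determining family, or simply because the only randomness is in the single a.s. event $k/n\to p$.)

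The main obstacle — really the only subtlety — is that one cannot phrase this as "the atoms at $\lambda_\pm$ move to zero": from \eqref{eqn:lambda+-}, $\lambda_-\to -1/(1-p)\cdot$ (bounded) but in fact $\lambda_+\sim (k-1)+\tfrac{kl}{k-1}\to\infty$ as $n\to\infty$, so $\delta_{\lambda_+}$ escapes to infinity. The point is that its mass $1/n$ vanishes fast enough that this escape is invisible in the weak limit, which is why the argument must be run against bounded test functions rather than by tracking the locations of the eigenvalues. I would state this explicitly in the proof to preempt the objection. Everything else is routine: invoke Theorem \ref{thm:spectrum of S(k,l)}, invoke the strong law, and pass to the limit in the displayed integral.
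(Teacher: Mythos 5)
Your argument is correct and is essentially the paper's proof (the paper just says ``by the strong law of large numbers, see also Theorem \ref{thmlimd}''): combine Theorem \ref{thm:spectrum of S(k,l)} with $k/n\to p$ a.s.\ and observe that the two atoms at $\lambda_\pm$ carry total mass $2/n\to0$, so they vanish in the weak limit against bounded continuous test functions. One small slip in your side remark: since $k\sim pn$ and $l\sim(1-p)n$, both $\lambda_+$ and $\lambda_-$ grow linearly in $n$ (in particular $\lambda_-\to-\infty$, it is not bounded, and the expansion $\lambda_+\sim(k-1)+kl/(k-1)$ is not valid because $4kl/(k-1)^2$ does not tend to $0$); this does not affect your proof, which only uses $\lvert f(\lambda_\pm)\rvert\le\lVert f\rVert_\infty$.
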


\begin{proof}
By the strong law of large numbers, see also Theorem \ref{thmlimd}.
\end{proof}

As for the the mean spectral distribution we have

\begin{theorem}
The mean spectral distribution of the binary threshold model $\mathcal{G}_{n}(p)$ 
is given by 
\begin{align}
\mu _{n}
&=\left(p-\frac{1}{n}\right)\delta_{-1}+\left(1-p-\frac{1}{n}\right)\delta_0 
\nonumber \\
&\qquad\quad +\frac{1}{n}\sum_{k=0}^n \binom{n}{k}p^k(1-p)^{n-k}
 \left(\delta_{\lambda_-(k)}+\delta_{\lambda_+(k)}\right),
\label{spec of BTM}
\end{align}
where
\[
\lambda_\pm(k)=\frac{k-1\pm\sqrt{(k-1)^2+4k(n-k)}}{2}\,,
\qquad k=0,1,\dots,n.
\]
\end{theorem}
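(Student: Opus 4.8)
The plan is to compute the mean spectral distribution by averaging the sample spectral distribution $\mu_{k,l}$ from Theorem~\ref{thm:spectrum of S(k,l)} over the binomial law of $k=|V^{(1)}|$. Since $X_1,\dots,X_n$ are independent Bernoulli$(p)$ trials, $k$ is $\mathrm{Binomial}(n,p)$ and $l=n-k$. Hence
\[
\mu_n=\mathbb{E}[\mu_{k,l}]
=\sum_{k=0}^n \binom{n}{k}p^k(1-p)^{n-k}\,\mu_{k,n-k}.
\]
Substituting the explicit form of $\mu_{k,l}$ from Theorem~\ref{thm:spectrum of S(k,l)} (with $l=n-k$, so $\lambda_\pm=\lambda_\pm(k)$ as in the statement), the contribution of $\delta_{\lambda_\pm(k)}$ is immediately the stated sum $\frac1n\sum_k\binom{n}{k}p^k(1-p)^{n-k}(\delta_{\lambda_-(k)}+\delta_{\lambda_+(k)})$, and it remains only to evaluate the coefficients of $\delta_{-1}$ and $\delta_0$.

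For the coefficient of $\delta_{-1}$ I would compute $\frac1n\sum_{k=0}^n\binom{n}{k}p^k(1-p)^{n-k}(k-1)=\frac1n(\mathbb{E}[k]-1)=\frac1n(np-1)=p-\frac1n$, using $\mathbb{E}[\mathrm{Binomial}(n,p)]=np$. Symmetrically, the coefficient of $\delta_0$ is $\frac1n\sum_k\binom{n}{k}p^k(1-p)^{n-k}(n-k-1)=\frac1n(n-np-1)=1-p-\frac1n$. Assembling the three pieces yields exactly \eqref{spec of BTM}.

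One small technical point deserves a remark rather than genuine difficulty: when $k=0$ (resp.\ $k=n$) the formula in Theorem~\ref{thm:spectrum of S(k,l)} must be read with the convention that a point mass with negative coefficient $\frac{k-1}{n}=-\frac1n$ (resp.\ $\frac{l-1}{n}=-\frac1n$) is cancelled against one of the atoms $\lambda_\pm(k)$; indeed for $k=0$ one has $\lambda_+(0)=0$, $\lambda_-(0)=-1$, and for $k=n$ one has $\lambda_+(n)=n-1$, $\lambda_-(n)=0$, so the bookkeeping is consistent and the total mass is $1$ in every term. Checking this edge-case consistency — that the signed combination really represents the genuine (non-negative) spectral measure of $G$ for the extreme values of $k$ — is the only place where care is needed; everything else is the linearity of expectation together with the binomial mean. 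This is the main (and only modest) obstacle, and it is handled by the explicit eigenvalue formulas.
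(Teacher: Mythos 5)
Your proposal is correct and follows essentially the same route as the paper: average the sample spectral distribution $\mu_{k,l}$ of Theorem~\ref{thm:spectrum of S(k,l)} over the binomial law of $k=|V^{(1)}|$ and evaluate the coefficients of $\delta_{-1}$ and $\delta_0$ via $\mathbb{E}[k]=np$, which is precisely the paper's ``direct computation.'' Your remark on the $k=0$ and $k=n$ edge cases (where $\lambda_-(0)=-1$ and $\lambda_\pm(n)\in\{0,n-1\}$ absorb the formally negative coefficients) is a sound piece of bookkeeping that the paper leaves implicit.
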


\begin{proof}
Since
\[
P(|V^{(1)}|=k, |V^{(0)}|=l)=\binom{n}{k}p^k(1-p)^l,
\qquad k+l=n,
\]
the mean spectral distribution is given by
\[
\mu=\sum_{k=0}^{n} \binom{n}{k}p^k(1-p)^l \mu_{k,l}\,.
\]
Then \eqref{spec of BTM} follows 
from Theorem \ref{thm:spectrum of S(k,l)} by direct computation.
\end{proof}

\begin{corollary}
Let $\mu _{n}$ be mean spectral distribution of the binary threshold model $\mathcal{G}_{n}(p)$. 
Then we have 
\begin{eqnarray*}
\lim _{n\to \infty }\mu _{n}=p\cdot \delta _{-1}+(1-p)\cdot \delta _{0}\,.
\end{eqnarray*}
\end{corollary}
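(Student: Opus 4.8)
The plan is to pass to the limit directly in the explicit formula \eqref{spec of BTM} for the mean spectral distribution $\mu_n$. Write $\mu_n = (p - 1/n)\,\delta_{-1} + (1-p-1/n)\,\delta_0 + R_n$, where
\[
R_n = \frac{1}{n}\sum_{k=0}^{n}\binom{n}{k}p^k(1-p)^{n-k}\bigl(\delta_{\lambda_-(k)} + \delta_{\lambda_+(k)}\bigr).
\]
The first two terms converge weakly to $p\,\delta_{-1} + (1-p)\,\delta_0$ trivially, so the whole task reduces to showing $R_n \Rightarrow 0$, i.e. that $R_n$ puts vanishing mass on any fixed bounded set as $n\to\infty$.

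The key observation is that $R_n$ has total mass exactly $2/n \to 0$. Since $R_n$ is a (sub-probability) measure with total mass $2/n$, for any bounded continuous test function $\varphi$ one has $|\int \varphi\, dR_n| \le \|\varphi\|_\infty \cdot 2/n \to 0$. Hence $R_n \Rightarrow 0$ in the sense of weak convergence of finite (signed) measures, and therefore $\mu_n \Rightarrow p\,\delta_{-1} + (1-p)\,\delta_0$. The first step is thus simply to split $\mu_n$ as above; the second step is to bound the mass of $R_n$; the third is to combine with the elementary convergence of the atomic parts at $-1$ and $0$.

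If one instead wants convergence in a stronger metric (e.g. Lévy metric, or pointwise convergence of distribution functions at continuity points), the argument is essentially the same: the cumulative contribution of $R_n$ to any value of the distribution function is at most $2/n$, and the atoms at $-1$ and $0$ have masses $p - 1/n \to p$ and $1-p-1/n \to 1-p$. One should note that the eigenvalues $\lambda_\pm(k)$ are irrelevant to the limit precisely because they carry total weight $O(1/n)$; there is no need to analyze their locations (though in fact $\lambda_+(k)$ can be as large as order $n$ while $\lambda_-(k)\in(-1,0)$, this plays no role).

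There is essentially no obstacle here: the only mild point to be careful about is that $R_n$ is a measure of total mass $2/n$ rather than a probability measure, so one invokes weak convergence of finite measures rather than of probability measures — but since the limit $p\,\delta_{-1}+(1-p)\,\delta_0$ is itself a probability measure and $\mu_n$ is a probability measure for each $n$, this causes no trouble. Accordingly the corollary follows immediately, as the stated one-line proof indicates, by direct passage to the limit in \eqref{spec of BTM}.
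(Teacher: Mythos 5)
Your proof is correct and is exactly the argument the paper leaves implicit: the corollary is stated there without proof as a direct consequence of \eqref{spec of BTM}, and your decomposition with the total-mass bound $2/n$ on the remainder $R_n$, combined with the convergence of the atoms $p-1/n\to p$ and $1-p-1/n\to 1-p$, is precisely what is needed for the weak limit. One incidental slip worth noting: your parenthetical claim that $\lambda_-(k)\in(-1,0)$ is false in general, since $\lambda_+(k)\lambda_-(k)=-k(n-k)$ forces $\lambda_-(k)$ to be of order $-n$ when $k$ is of order $n$; as you yourself observe, however, the locations of these eigenvalues play no role in the argument, so this does not affect the proof.
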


%%%%%%%%%%%%%%%%%%%%%%%%%%%%%%%%%%%%%%%%%%%%%%%%%%%%%%%%%%%%%%%%%%%%%%%%%%%%%%%%%%%%%%%%%%%
{\bf Acknowledgment.} 
NK is funded by 
the Grant-in-Aid for Scientific Research (C) of 
Japan Society for the Promotion of Science (Grant No. 21540118). 
NO is funded by 
the Grant-in-Aid for Challenging Exploratory Research of 
Japan Society for the Promotion of Science (Grant No. 19654024).
%%%%%%%%%%%%%%%%%%%%%%%%%%%%%%%%%%%%%%%%%%%%%%%%%%%%%%%%%%%%%%%%%%%%%%%%%%%%%%%%%%%%%%%%%%%

%%%%%%%%%%%%%%%%%%%%%%%%%%%%%%%%%%%%%%%%%%%%%%%%%%%%%%%%%%%%%%%%%%%%%%%%%%%%%%%%%%%%%%%%%%%%%

\begin{thebibliography}{99}
\footnotesize
\bibitem{Albert02}
Albert, R., and  Barab\'{a}si, A. -L. (2002). 
``Statistical mechanics of complex networks,"
{\em Rev. Mod. Phys.}
{\bf 74}, 47--97.

\bibitem{blmch}
Boccaletti, S., Latora, V., Moreno, Y., Chavez, M., and Hwang, D. -U. (2006). 
``Complex networks: structure and dynamics,"
{\em Phys. Rep.}
{\bf 424}, 175--308.

\bibitem{bps}
Bogu\~{n}\'{a}, M., and Pastor-Satorras, R. (2003). 
``Class of correlated random networks with hidden variables,"
{\em Phys.\ Rev.\ E}
{\bf 68}, 036112.

\bibitem{BoseSen2007}
Bose, A., and  Sen, A. (2007). 
``On asymptotic properties of the rank of a special random adjacency matrix,"
{\em Elect. Comm. in Probab.}
{\bf 12}, 200--205.

\bibitem{ccdm}
Caldarelli, G., Capocci, A., De Los Rios, P., and Mu\~{n}oz, M. A. (2002).  
``Scale-free networks from varying vertex intrinsic fitness,"
{\em Phys.\ Rev.\ Lett.}
{\bf 89}, 258702.

\bibitem{dhj09}
Diaconis, P., Holmes, S., and Janson, S. (2009). 
``Threshold graph limits and random threshold graphs,''
{\em Internet Mathematics} 
{\bf 5}, no. 3, 267--318. 

\bibitem{fikmmu09IIS}
Fujihara, A., Ide, Y., Konno, N., Masuda, N., Miwa, H., and Uchida, M. (2009). 
``Limit theorems for the average distance and the degree distribution 
of the threshold network model,"
{\em Interdisciplinary Information Sciences}
{\bf 15}, no.3, 361--366.

\bibitem{fum}
Fujihara, A., Uchida, M., and Miwa, H. (2009). 
``Universal power laws in threshold network model: 
theoretical analysis based on extreme value theory,"
{\em Physica\ A}
{\bf 389}, 1124--1130.

\bibitem{hss}
Hagberg, A., Schult, D. A., and Swart, P. J. (2006). 
``Designing threshold networks with given structural and dynamical properties,"
{\em Phys.\ Rev.\ E}
{\bf 74}, 056116.

\bibitem{HO}
Hora, A. and Obata, N. (2007).
{\em Quantum Probability and Spectral Analysis of Graphs},
Springer.

\bibitem{ikm07rims}
Ide, Y., Konno, N., and Masuda, N. (2007). 
``Limit theorems for some statistics of a generalized threshold network model,''
{\em RIMS Kokyuroku}, 
no.1551, Theory of Biomathematics and its Applications III, 81-86.

\bibitem{ikm09MCAP}
Ide, Y., Konno, N., and Masuda, N. (2009). 
``Statistical properties of a generalized threshold network model,"
to appear in 
{\em Methodol. Comput. Appl. Probab.}

\bibitem{kmrs05}
Konno, N., Masuda, N., Roy, R., and Sarkar, A. (2005).  
``Rigorous results on the threshold network model,"
{\em J.\ Phys.\ A: Math. Gen.}
{\bf 38}, 6277--6291.

\bibitem{mp95}
Mahadev, N.V.R., Peled, U.N. (1995).
{\em Threshold Graphs and Related Topics,}
Elsevier. 

\bibitem{mmk04}
Masuda, N., Miwa, H., and Konno, N. (2004). 
``Analysis of scale-free networks based on a threshold graph with intrinsic vertex weights,"
{\em Phys.\ Rev.\ E}
{\bf 70}, 036124.

\bibitem{mmk05}
Masuda, N., Miwa, H., and Konno, N. (2005). 
``Geographical threshold graphs with small-world and scale-free properties,"
{\em Phys.\ Rev.\ E}
{\bf 71}, 036108.

\bibitem{mk06}
Masuda, N., and Konno, N. (2006). 
``VIP-club phenomenon: Emergence of elites and masterminds in social networks," 
{\em Social Networks}
{\bf 28}, 297--309.

\bibitem{Merris1994}
Merris, R. (1994). 
``Degree maximal graphs are Laplacian integral,"
{\em Linear Algebr. Appl.}
{\bf 199}, 381--389.

\bibitem{Merris1998}
Merris, R. (1998). 
``Laplacian graph eigenvectors,"
{\em Linear Algebr. Appl.}
{\bf 278}, 221--236.

\bibitem{NewmanSIAM}
Newman, M. E. J. (2003). 
``The structure and function of complex networks,"
{\em SIAM Rev.}
{\bf 45}, 167--256.

\bibitem{scb}
Servedio, V. D. P., Caldarelli, G., and Butt\'a, P. (2004). 
``Vertex intrinsic fitness: How to produce arbitrary scale-free networks,"
{\em Phys.\ Rev.\ E}
{\bf 70}, 056126.

\bibitem{so}
S\"oderberg, B. (2002). 
``General formalism for inhomogeneous random graphs,"
{\em Phys.\ Rev.\ E}
{\bf 66}, 066121.

\bibitem{Taraskin}
Taraskin, S. N. (2005).
``Spectral properties of disordered fully connected graphs,"
{\em Phys. Rev. E} {\bf 72}, 056126.


\end{thebibliography}
\end{document}